\def\ni{\noindent}
\def\N{\mathbb{N}_0}
\def\S {\Sigma}
\def\s {\sigma}
\def\cP{\mathcal{P}}
\newtheorem{thm}{Theorem}[section]
\newtheorem{defn}{Definition}[section]
\newtheorem{rem}{Remark}[section]
\newtheorem{prob}{Problem}
\title{\textbf{\sc Some New Results on Integer Additive Set-Valued Signed Graphs}}
\author{N. K. Sudev}
\affil{\small Department of Mathematics\\ Vidya Academy of Science \& Technology \\Thalakkottukara, Thrissur,  India.\\ {\tt sudevnk@gmail.com}}
\author{P. K. Ashraf}
\affil{\small Department of Mathematics\\Government Arts \& Science College\\ Kondotty, Malappuram, India\\ {\tt ashrafkalanthod@gmail.com}}
\author{K. A. Germina}
\affil{\small Department of Mathematics\\ University of Botswana\\Gaborone, Botswana.\\ {\tt srgerminaka@gmail.com}}
\date{}
\begin{document}
\maketitle

\begin{abstract}
Let $X$ denotes a set of non-negative integers and $\cP(X)$ be its power set. An integer additive set-labeling (IASL) of a graph $G$ is an injective set-valued function $f:V(G)\to \cP(X)-\{\emptyset\}$ such that the induced function $f^+:E(G) \to \cP(X)-\{\emptyset\}$ is defined by $f^+(uv)=f(u)+f(v);\ \forall\, uv\in E(G)$, where $f(u)+f(v)$ is the sumset of $f(u)$ and $f(v)$. An IASL of a signed graph is an IASL of its underlying graph $G$ together with the signature $\s$ defined by $\s(uv)=(-1)^{|f^+(uv)|};\ \forall\, uv\in E(\S)$. In this paper, we discuss certain characteristics of the signed graphs which admits certain types of integer additive set-labelings.
\end{abstract}

\vspace{0.2cm}

\ni \textbf{Key words}: Signed graphs; balanced signed graphs; clustering of signed graphs; integer additive set-labeled signed graphs; arithmetic integer additive set-labeled signed graphs.
\vspace{0.04in}

\noindent \textbf{AMS Subject Classification}: 05C78, 05C22. 

\section{Introduction}

For all  terms and definitions, not defined specifically in this paper, we refer to \cite{JAG,FH, DBW} and for the terminology and results in the theory of signed graphs, see \cite{AACE,GSH,FHS,TZ1,TZ2}. Unless mentioned otherwise, all graphs considered here are simple, finite and have no isolated vertices. 

A set-labeling of a graph $G$ can generally be considered as an assignment of the vertices of a graph to the subsets of a non-empty set $X$ in an injective manner such that the set-labels of the edges of $G$ are obtained by taking the symmetric difference the set-labels of their end vertices.  Some studies on set-labeling of signed graphs has been discussed in \cite{AGS1} and that of signed digraphs has been done in \cite{BDAS}. 

The {\em sum set} (see \cite{MBN}) of two sets $A$ and $B$, denoted by $A+B$, is defined as $A+B=\{a+b:a\in A, b\in B\}$.  Let $\mathbb{N}_0$ be the set of all non-negative integers and let $X$ be a non-empty subset of $\N$. Using the concepts of sumsets, we have the following notions as defined in \cite{GS1,GS0}.

\begin{defn}{\rm 
\cite{GS1,GS0} An {\em integer additive set-labeling} (IASL, in short) is an injective function $f:V(G)\to \cP(X)-\{\emptyset\}$ such that the induced function $f^+:E(G)\to \cP(X)-\{\emptyset\}$ is defined by $f^+{(uv)}=f(u)+f(v)~ \forall uv\in E(G)$.  A graph $G$ which admits an IASL is called an {\em integer additive set-labeled graph} (IASL-graph).}
\end{defn}  

\begin{defn}{\rm
\cite{GS1,GS0}  An {\em integer additive set-indexer} (IASI) is an injective function $f:V(G)\to \cP(X)-\{\emptyset\}$ such that the induced function $f^+:E(G) \to \cP(X)-\{\emptyset\}$ is also injective. A graph $G$ which admits an IASI is called an {\em integer additive set-indexed graph} (IASI-graph).}
\end{defn}   

The cardinality of the set-label of an element (vertex or edge) of a graph $G$ is called the {\em set-indexing number} of that element.

\subsection{Signed Graphs and Their IASLs}

A \textit{signed graph} (see \cite{TZ1,TZ2}), denoted by $\S(G,\s)$,  is a graph $G(V,E)$ together with a function $\s:E(G)\to \{+,-\}$ that assigns a sign, either $+$ or $-$, to each ordinary edge in $G$. The function $\s$ is called the {\em signature} or {\em sign function} of $\S$, which is defined on all edges except half edges and is required to be positive on free loops.

An edge $e$ of a signed graph $\S$ is said to be a \textit{positive edge} if $\s(e)=+$ and an edge $\s(e)$ of a signed graph $\S$ is said to be a \textit{negative edge} if $\s(e)=-$.

A  simple  cycle (or path) of a signed graph $\S$  is said to be {\em balanced} (see \cite{AACE,FHS}) if the product of signs of its edges is $+$. A  signed  graph  is said to be a {\em balanced signed graph} if it contains no half edges and all of its simple cycles are balanced. 

It is to be noted that the number of all negative  signed graph is balanced if and only if it is bipartite.  The notion of a signed graph that is associated to a given IASL-graph has been introduced as follows.

\begin{defn}{\rm 
\cite{GS5} Let $\S$ be a signed graph, with underlying graph $G$ and signature $\s$. An injective function $f:V(\S)\to \cP(X)-\{\emptyset\}$ is said to be an \textit{integer additive set-labeling} (IASL) of $\S$ if $f$ is an integer additive set-labeling of the underlying graph $G$ and the signature of $\S$ is defined by $\s(uv)=(-1)^{|f^+(uv)|}$. }
\end{defn}

A signed graph which admits an integer additive set-labeling is called an \textit{integer additive set-labeled signed graph} (IASL-signed graph) and is denoted by $\S_f$. If the context is clear, we can represent an IASL-signed graph by $\S$ itself. 

Some interesting studies on the signed graphs which admit different types of integer additive set-labelings have been done in \cite{GS5}. Motivated by the above mentioned studies, in this paper, we further study the characteristics of signed graphs which admits certain other IASLs.

\section{Arithmetic IASL-Signed Graphs}

By saying that a set is an arithmetic progression, we mean that the elements of that set is in arithmetic progression. The notion of an arithmetic integer additive set-labeling (AIASL) of a given graph  was introduced in \cite{GS3} as given below.  

\begin{defn}{\rm
\cite{GS3} An \textit{arithmetic integer additive set-labeling} of a graph $G$ is an integer additive set-indexer $f$ of $G$, with respect to which the set-labels of all vertices and edges of $G$ are arithmetic progressions. A graph that admits an AIASL is called an {\em arithmetic integer additive set-labeled graph}(AIASL-graph).}
\end{defn}

If the context is clear, the common difference of the set-label of an element of $G$ can be called the common difference of that element. The \textit{deterministic ratio} of an edge of $G$ is the ratio, $k\ge 1$ between the common differences of its end vertices. The following theorem is a necessary and sufficient condition for a graph $G$ to admit an arithmetic integer additive set-labeling.

\begin{thm}\label{Thm-AIASI-1}
{\rm \cite{GS3}} A graph $G$ admits an arithmetic integer additive set-labeling $f$ if and only if for every edge of $G$, the set-labels of its end vertices are arithmetic progressions with common differences $d_u$ and $d_v$ such that $d_u\le d_v$ and its deterministic ratio $\frac{d_v}{d_u}$ is a positive integer less than or equal to $|f(u)|$.
\end{thm}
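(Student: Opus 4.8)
The plan is to reduce the theorem to a single arithmetic fact: the sumset of two finite arithmetic progressions is again an arithmetic progression precisely under the stated ratio condition. Since an AIASL is just an IASI all of whose vertex and edge labels are arithmetic progressions, and since the edge label $f^+(uv)$ is the sumset $f(u)+f(v)$, the only thing to check on each edge is when this sumset is an arithmetic progression. So I fix an edge $uv$ and write the vertex labels as the progressions $f(u)=\{a+i\,d_u:0\le i\le m-1\}$ and $f(v)=\{b+j\,d_v:0\le j\le n-1\}$ with $m=|f(u)|$ and $n=|f(v)|$; after possibly interchanging $u$ and $v$ I may assume $d_u\le d_v$. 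Then
\[
f^+(uv)=f(u)+f(v)=\{(a+b)+i\,d_u+j\,d_v:0\le i\le m-1,\ 0\le j\le n-1\},
\]
so the whole question becomes: for which $d_u,d_v$ is the offset set $\{i\,d_u+j\,d_v\}$ an arithmetic progression?

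For the necessity direction I would first determine the common difference that $f^+(uv)$ must have. Its smallest element is $a+b$, and because $d_u\le d_v$ no offset $i\,d_u+j\,d_v$ with $(i,j)\neq(0,0)$ can be smaller than $d_u$; hence the second-smallest element is exactly $a+b+d_u$, and so an arithmetic $f^+(uv)$ is forced to have common difference $d_u$. Since $a+b+d_v$ also belongs to $f^+(uv)$, it must equal $a+b+t\,d_u$ for some positive integer $t$, giving $d_v=t\,d_u$; thus the deterministic ratio $k=d_v/d_u$ is a positive integer. To obtain $k\le m$ I would write the offsets, after dividing by $d_u$, as the union over $j$ of the blocks $B_j=\{jk,jk+1,\dots,jk+m-1\}$. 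Block $B_j$ ends at $jk+m-1$ while $B_{j+1}$ begins at $(j+1)k$, so their union is an unbroken run of consecutive integers (equivalently $f^+(uv)$ is an arithmetic progression) if and only if $(j+1)k\le(jk+m-1)+1$, that is, $k\le m=|f(u)|$. If $k>m$ a genuine gap appears between the first two blocks, so $f^+(uv)$ fails to be a progression.

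The sufficiency direction then runs the same block computation in reverse: if $k$ is a positive integer with $k\le m$, the blocks $B_0,B_1,\dots,B_{n-1}$ overlap or abut, so their union is the full integer interval $\{0,1,\dots,(m-1)+(n-1)k\}$, and multiplying back by $d_u$ and translating by $a+b$ shows that $f^+(uv)$ is the arithmetic progression with first term $a+b$, common difference $d_u$, and $(m-1)+(n-1)k+1$ terms. Applying this equivalence to every edge yields the theorem, with the proviso that the injectivity of $f^+$ demanded of an IASI is inherited from the standing hypothesis that $f$ is an integer additive set-indexer.

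I expect the block-and-gap analysis to be the main obstacle: the crux is to show that $k\le m$ is exactly the threshold separating a contiguous union of blocks from one with a hole, and to dispatch the boundary case $d_u=d_v$ (where $k=1$ and the two vertices may be labelled either way) together with the degenerate cases $m=1$ or $n=1$ cleanly. Establishing that the forced common difference is precisely $d_u$, rather than some proper divisor of it, is the other delicate point, and it is exactly there that the ordering $d_u\le d_v$ does the work.
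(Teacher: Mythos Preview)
The paper does not supply its own proof of this theorem; it is quoted verbatim from \cite{GS3} as a background result, so there is nothing in the present paper to compare your argument against. That said, your sketch is correct and is essentially the standard argument one would expect in the cited source: reduce the global statement to the single-edge question of when $f(u)+f(v)$ is an arithmetic progression, observe that the two smallest elements force the common difference to be $d_u$, deduce integrality of $k=d_v/d_u$ from membership of $a+b+d_v$, and then run the block decomposition $B_j=\{jk,\dots,jk+m-1\}$ to see that contiguity of the union is equivalent to $k\le m=|f(u)|$. The degenerate cases $m=1$ or $n=1$ (singleton labels, where the common difference is not intrinsically defined) and the boundary $d_u=d_v$ are the only places requiring a word of care, and you have flagged them.
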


The following theorem discussed the cardinality of the set-label of the edges of an arithmetic integer additive set-labeled graph.

\begin{thm}{\rm 
{\rm \cite{GS3}}  Let $f$ be an integer additive set-labeling of a graph $G$. Then, the set-indexing number of any edge $uv$ of $G$, with $d_u\le d_v$, is given by $|f(u)|+\frac{d_v}{d_u}\left(|f(u)|-1\right)$.	}
\end{thm}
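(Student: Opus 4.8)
The plan is to compute the set-indexing number $|f^+(uv)|$ directly from the arithmetic-progression structure of the two vertex labels. First I would fix notation by writing $f(u)=\{a+i\,d_u : 0\le i\le |f(u)|-1\}$ and $f(v)=\{b+j\,d_v : 0\le j\le |f(v)|-1\}$, where $d_u\le d_v$ are the common differences of the end vertices. Since $f$ is an arithmetic IASL, Theorem~\ref{Thm-AIASI-1} applies and guarantees that the deterministic ratio $k:=\frac{d_v}{d_u}$ is a positive integer with $k\le |f(u)|$; this integrality is exactly what lets me factor $d_u$ out of every pairwise sum. Writing a typical element of the edge label $f^+(uv)=f(u)+f(v)$ in the form $(a+b)+(i+jk)\,d_u$, I reduce the whole problem to counting the distinct values taken by the integer combination $i+jk$ as $i$ ranges over $\{0,\dots,|f(u)|-1\}$ and $j$ over $\{0,\dots,|f(v)|-1\}$.

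Next I would organise these combinations into blocks indexed by $j$: for each fixed $j$ the values $i+jk$ form the run of $|f(u)|$ consecutive integers $jk,\,jk+1,\dots,jk+|f(u)|-1$, and passing from $j$ to $j+1$ shifts this run by exactly $k$. Because $k\le |f(u)|$, the start $(j+1)k$ of the next run never exceeds the end $jk+|f(u)|-1$ of the current one by more than one, so consecutive runs overlap or abut with no gap between them. Hence the union of all runs is a single block of consecutive integers stretching from the smallest combination (namely $0$) up to the largest. The largest such combination is attained when both indices are maximal, and since every integer between $0$ and this maximum occurs, the number of distinct edge-labels is one more than that maximum, which is precisely the set-indexing number asserted in the statement.

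I expect the only genuinely delicate point to be this no-gap (contiguity) argument, and it is here that the hypothesis furnished by Theorem~\ref{Thm-AIASI-1} is indispensable: the bound $k\le |f(u)|$ is exactly the condition that prevents the successive runs from separating. If instead $k>|f(u)|$ held, consecutive runs would be disjoint and one would count $|f(u)|\cdot|f(v)|$ distinct sums rather than a contiguous interval, so the stated closed form would fail; thus the arithmetic IASL hypothesis is used in an essential, not merely cosmetic, way. Two minor matters remain to be dispatched: the boundary case $k=|f(u)|$, where the runs abut exactly, and the observation that the common shift by $a+b$ and the scaling by $d_u$ are injective operations and so leave the cardinality count unchanged. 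Recording the minimum and maximum of $i+jk$ then delivers the claimed value of the set-indexing number of the edge $uv$.
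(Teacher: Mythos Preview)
The paper does not supply its own proof of this theorem: it is quoted as a result from \cite{GS3} and left unproved here, so there is no in-paper argument to compare against. Your proposed argument is correct and is the natural one: parametrise the two arithmetic progressions, reduce the edge label to the set $\{(a+b)+(i+jk)d_u\}$, and use the bound $k\le |f(u)|$ from Theorem~\ref{Thm-AIASI-1} to show that the integers $i+jk$ fill a contiguous interval; the count then drops out immediately.

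One small point worth flagging: the displayed formula in the statement contains a typographical slip. Your computation (and the paper's own use of the result in the proof of Theorem~\ref{Thm-2.1}) gives $|f(u)|+k\bigl(|f(v)|-1\bigr)$, whereas the statement as printed has $|f(u)|-1$ in the bracket. Your maximum-index argument correctly produces $(|f(u)|-1)+k(|f(v)|-1)+1$, so you should record the formula with $|f(v)|-1$ and note the misprint rather than try to match the printed version.
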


All set-labels mentioned in this section are arithmetic progressions so that the given signed graph $\S$ admits an arithmetic integer additive set-labeling. Invoking the above results, we establish the following theorem for an edges of an AIASL-graph to be a positive edge.

\begin{thm}\label{Thm-2.1}
Let $\S$ be a signed graph which admits an AIASL $f$. Then, an edge $uv$ is a positive edge of an IASL-signed graph if and only if 
\begin{enumerate}\itemsep0mm
	\item[(i)] the set-labels $f(u)$ and $f(v)$ are of different parity, provided the deterministic ratio of the edge $uv$ is odd.
	\item[(ii)] the set-label of the end vertex, with minimum common difference, is of even parity, provided the deterministic ratio of the edge $uv$ is even.
\end{enumerate}
\end{thm}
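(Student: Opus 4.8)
The plan is to convert the statement about edge signs into a statement about the parity of a single integer, the set-indexing number of the edge $uv$, and then to settle it by a short computation modulo $2$.

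First I would recall that, by the definition of an IASL-signed graph, the signature is $\s(uv)=(-1)^{|f^+(uv)|}$; hence $uv$ is a positive edge precisely when $|f^+(uv)|$ is even. Thus the whole theorem amounts to deciding when the set-indexing number of $uv$ is even. Next, since $\S$ admits an AIASL, Theorem~\ref{Thm-AIASI-1} guarantees that, for the edge $uv$ labelled so that $d_u\le d_v$, the deterministic ratio $k=\frac{d_v}{d_u}$ is a positive integer with $k\le |f(u)|$. Under exactly this hypothesis the preceding edge-cardinality theorem applies and yields $|f^+(uv)|=|f(u)|+k\,(|f(v)|-1)$. Writing $p=|f(u)|$ for the set-indexing number of the end vertex of minimum common difference and $q=|f(v)|$ for that of the other endpoint, I would simply reduce $p+k(q-1)$ modulo $2$.

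The argument then splits on the parity of $k$. If $k$ is odd, then $k(q-1)\equiv q-1\pmod 2$, so $|f^+(uv)|\equiv p+q-1\pmod 2$, which is even if and only if $p$ and $q$ have opposite parities; this is exactly the assertion that $f(u)$ and $f(v)$ are of different parity, establishing part (i). If $k$ is even, then $k(q-1)\equiv 0\pmod 2$, so $|f^+(uv)|\equiv p\pmod 2$, which is even if and only if $p$ is even, i.e.\ the end vertex of minimum common difference carries a set-label of even parity, establishing part (ii). Because each of these reductions is a genuine equivalence, both directions of the ``if and only if'' are obtained simultaneously, with no separate converse argument needed.

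I expect no serious obstacle here: the mathematical content lies entirely in invoking the cardinality formula correctly and in the bookkeeping of the two cases. The only points demanding care are interpreting ``parity of a set-label'' as the parity of its set-indexing number (cardinality), and consistently keeping $u$ as the endpoint of smaller common difference, so that it is the term $k(|f(v)|-1)$, and not $k(|f(u)|-1)$, that vanishes modulo $2$ when $k$ is even. Getting that identification right is precisely what makes the even-ratio case depend only on the minimum-common-difference endpoint, as demanded in (ii), and I would double-check it against a small worked example before finalising.
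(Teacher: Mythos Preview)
Your proposal is correct and follows essentially the same approach as the paper: both invoke the edge-cardinality formula $|f^+(uv)|=|f(u)|+k(|f(v)|-1)$ and then split on the parity of $k$, with the paper enumerating the four parity subcases explicitly while you compress them into the single congruence $p+q-1\pmod 2$ (resp.\ $p\pmod 2$).
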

\begin{proof}
By Theorem \ref{Thm-2.1}, the set-indexing number of $uv$ is given by $|f(u)|+k(|f(v)|-1)$. Note that the edge $uv$ is a positive edge of $\S$ if and only if the value of $|f(u)|+k(|f(v)|-1)$ is a positive integer.	
	
\textit{Case-1} Assume that the deterministic ratio of an edge $uv$ of $\S$ is $k=\frac{d_v}{d_u}$, an odd positive integer.  

Let $f(u)$ and $f(v)$ be of different parity. If $f(u)$ is of even parity and $f(v)$ is of odd parity, then $|f(v)|-1$ is even and hence $k(|f(v)|-1)$ is an even integer. Therefore, $(|f(u)|+k(|f(v)|-1))$ is an even integer and hence the signature of the edge $uv$ is positive. If $f(u)$ is of odd parity and $f(v)$ is of even parity, then $|f(v)|-1$ and hence $k(|f(v)|-1)$ are odd integers. Hence, $(|f(u)|+k(|f(v)|-1))$ is even and hence $\s(uv)$ is positive. 

Next, assume that $f(u)$ and $f(v)$ are of same parity. If $f(u)$ and $f(v)$ are of odd parity, then $|f(v)|-1$ is even and hence $k(|f(v)|-1)$ is also an even integer.  Therefore, $(|f(u)|+k(|f(v)|-1))$ is odd and $\s(uv)$ is negative. If $f(u)$ and $f(v)$ are of even parity, then $|f(v)|-1$ and hence $k(|f(v)|-1)$ are odd integers.  Therefore, $(|f(u)|+k(|f(v)|-1))$ is odd and $uv$ is a negative edge of $G$. 

Hence, any edge of an AIASL-signed graph $\S$ is a positive edge if and only if the set-labels of its end vertices are of the different parity.

\textit{Case-2}: Let the deterministic ratio $k$ of the edge $uv$ is an even. Then, $k(|f(v)|-1)$ is always even irrespective of the parity of the set-label $f(v)$. Hence, $(|f(u)|+k(|f(v)|-1))$ is even only when $|f(u)|$ is even. Therefore, $\s(uv)$ is positive if and only if $|f(u)|$ is even.
\end{proof}

The following theorem establishes a necessary and sufficient condition for an AIASL-signed graph to a balanced signed graph.

\begin{thm}\label{Thm-2.2}
An AIASL-signed graph $\S$ is balanced if and only if its underlying graph $G$ is bipartite.
\end{thm}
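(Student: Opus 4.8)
The plan is to turn the global balance condition into a local parity count and then match that count against the length of each cycle. Recall that $\S$ is balanced precisely when every simple cycle $C=v_1v_2\cdots v_\ell v_1$ of $G$ carries an even number of negative edges, i.e. when $\prod_{i=1}^{\ell}\s(v_iv_{i+1})=+$. Since $\s(uv)=(-1)^{|f^+(uv)|}$, we have $\prod_{i=1}^{\ell}\s(v_iv_{i+1})=(-1)^{\sum_{i=1}^{\ell}|f^+(v_iv_{i+1})|}$, so $C$ is balanced if and only if $\sum_{i=1}^{\ell}|f^+(v_iv_{i+1})|$ is even. The whole theorem therefore reduces to showing that this edge-cardinality sum has the same parity as $\ell$ for every cycle; once that is in hand, every cycle is balanced if and only if every cycle is even, which is exactly bipartiteness of $G$.

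For the parity of each summand I would invoke Theorem~\ref{Thm-2.1}. Writing $\varepsilon(v)=|f(v)|\bmod 2$, an edge $uv$ of odd deterministic ratio contributes $|f^+(uv)|\equiv \varepsilon(u)+\varepsilon(v)+1\pmod 2$, while an edge of even deterministic ratio contributes $|f^+(uv)|\equiv \varepsilon(w)\pmod 2$, where $w$ is the end vertex of minimum common difference. If every edge of $C$ has odd ratio the computation is immediate: summing $\varepsilon(v_i)+\varepsilon(v_{i+1})+1$ around $C$, each vertex parity $\varepsilon(v_i)$ occurs in exactly two of the edges and cancels modulo $2$, leaving $\sum_i|f^+(v_iv_{i+1})|\equiv \ell\pmod 2$, as required.

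The step I expect to be the main obstacle is the even-ratio case, because Theorem~\ref{Thm-2.1}(ii) makes $|f^+(uv)|\bmod 2$ depend on a single endpoint, so the clean pairwise cancellation above no longer applies and the one-sided parities $\varepsilon(w)$ must be tracked explicitly along $C$. The structural fact I would try to exploit is that an edge has even ratio exactly when the common differences of its endpoints are divisible by different powers of two; traversing the closed walk $C$ these divisibilities must return to their starting values, which restricts how the even-ratio edges and their selected minimum-difference endpoints can be arranged around the cycle. The crux of the proof is to verify that, under this restriction, the accumulated one-sided contributions still total $\ell$ modulo $2$; this is precisely the point where the naive argument could fail, and I would scrutinise small even cycles carefully to confirm that no choice of vertex set-indexing numbers upsets the parity before asserting the identity in general.

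Granting the parity identity $\sum_i|f^+(v_iv_{i+1})|\equiv \ell\pmod 2$ for all cycles, both directions follow at once. If $G$ is bipartite then every cycle has even length, so the sum is even and every cycle is balanced, whence $\S$ is balanced. Conversely I would argue by contraposition: if $G$ is not bipartite it contains an odd cycle, on which the same identity gives $\sum_i|f^+(v_iv_{i+1})|\equiv \ell\equiv 1\pmod 2$, so that cycle is negative and $\S$ is unbalanced.
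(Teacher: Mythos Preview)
Your caution about the even-ratio case is well placed: the parity identity $\sum_i|f^+(v_iv_{i+1})|\equiv\ell\pmod 2$ that your whole argument rests on is in fact \emph{false}, and the failure is precisely of the kind you anticipated. On the $4$-cycle $v_1v_2v_3v_4v_1$ take
\[
f(v_1)=\{0,1\},\qquad f(v_2)=\{0,2\},\qquad f(v_3)=\{0,4\},\qquad f(v_4)=\{0,2,4\},
\]
so the common differences are $1,2,4,2$ and every edge has deterministic ratio $2$. The edge set-labels have sizes $4,4,5,6$, giving signatures $(+,+,-,+)$ and hence a negative $4$-cycle. Here the ``one-sided'' parities $\varepsilon(w)$ do not cancel: the two edges incident with $v_4$ both select the \emph{other} endpoint as the minimum-difference vertex, so $\varepsilon(v_4)$ never enters the sum and its value can be chosen freely to upset the parity. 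Thus a bipartite underlying graph can carry an AIASL whose signed graph is unbalanced, so under the reading ``for the given AIASL $f$, $\S$ is balanced iff $G$ is bipartite'' the theorem itself is false and your identity cannot be repaired.

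The paper does not attempt anything like your parity count. In both directions it \emph{chooses} an AIASL rather than analysing a fixed one: for bipartite $G$ it labels the two colour classes with sets of prescribed parities, and for non-bipartite $G$ it relabels an odd cycle so that adjacent vertices alternate parity and then argues about the single forced negative edge. This reads as an existential statement (``$G$ admits a balanced AIASL-signature iff $G$ is bipartite''), which is different from what you set out to prove; but even that reading is shaky, since the triangle with $f(v_1)=\{0,1,2,3\}$, $f(v_2)=\{0,2\}$, $f(v_3)=\{0,4\}$ (ratios $2,2,4$) has all three edge set-labels of even size and is therefore a balanced AIASL on a non-bipartite graph. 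In short, the gap you sensed in the even-ratio step is genuine and fatal to the universal reading, while the paper's own construction-based argument is aimed at a different (and still problematic) interpretation of the statement.
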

\begin{proof}
First assume that the underlying graph $G$ of a given signed graph is bipartite. Let $(V_1,V_2)$ be a bipartition of $G$. Then, label the vertices of $G$ in such a way that all vertices in the same partition have the same parity set-labels (arithmetic progressions). Here note that all edges in $\S$ now have either positive signature simultaneously or negative signature simultaneously. Since all cycles in $\S$ are even cycles, in all possible cases, the number of negative edges in each cycle will always be even. Hence, $\S$ is balanced.

Conversely, assume that the AIASL-signed graph is balanced. If possible, let the underlying graph $G$ be non-bipartite. Then, $G$ contains some odd cycles. Let $C: v_1v_2v_3\ldots v_nv_1$ be one odd cycle in $G$. Label the vertices of $G$ by certain arithmetic progressions in such a way that there exists minimum number of different parity set-labels. This can be done by labeling any two adjacent vertices by different parity sets. In this way, we can see without loss of generality that the vertices $v_1, v_3, v_5, \ldots, v_{n-2}$ have the same parity set-labels, say odd parity set-labels. Hence, the vertices $v_2, v_4, v_6, \ldots, v_{n-1}$ have even parity set-labels. If $v_n$ has an odd parity set-label, then the edge $v_nv_1$ is the one only negative edge of $G$ and if $v_n$ has an even parity set-label, then the edge $v_{n-1}v_n$ is the one only negative edge of $G$. Moreover, if the parity of any one vertex, say $v_i$ is changed, then the set-labels of vertices $v_{i-1}, v_i, v_{i+1}$ become same parity sets and hence the signature of two edges $v_{i-1}v_i$ and $v_iv_{i+1}$ become negative, keeping the number of negative edges in $\S$ odd. This is a contradiction to the hypothesis that $\S$ is balanced. Hence, $G$ must be bipartite.   
\end{proof}



\section{AIASL of Certain Associated Signed Graphs}

An IASL of signed graph $\S$ is said to \textit{induce} the IASL $f$ of $\S$ on its associated graphs if the following general conditions are hold.
\begin{enumerate}\itemsep0mm
	\item[(i)] the set-labels of corresponding elements of the graph and the associated graphs have the same set-labels,
	\item[(ii)] If a new edge (which is not in $\S$) is introduced in the associated graph, then the set-label and signature of that edge is determined by corresponding rules,  
	\item[(iii)] if one edge of $\S$ is replaced by another vertex (not in $\S$) in the associated graph, then the new vertex is given the same set-label of the removed edge.
\end{enumerate}

In this section, we discuss the induced characteristics of certain signed graphs associated with the AIASL-signed graphs. 

\begin{rem}{\rm 
Let $\S'$ be a signed subgraph of a balanced AIASL-signed graph $\S$ given by $\S'=\S-v$, where $v$ is an arbitrary vertex of $\S$. If $v$ is not in any cycle of $\S$, then the removal of $v$ does not affect the number negative edges in the cycles of $\S$. If $v$ is in a cycle $C$ of $\S$, then the removal of $v$ makes $C-v$ acyclic. In this case also, removal of $v$ does not affect the number negative edges in the cycles of $\S-v$. Then, $\S'$ is balanced whenever $\S$ is balanced.
}\end{rem}

A \textit{spanned signed subgraph} $\S'$ of a signed graph $\S$ is a signed graph which preserves signature and whose underlying graph $H$ is a spanning subgraph of the underlying graph $G$ of $\S$. The following result is an obvious and immediate from the corresponding definition of the balanced signed graphs.

\begin{rem}{\rm 
Let $\S'$ be a spanned signed subgraph of a balanced AIASL-signed graph $\S$. Then, $\S'$ is balanced with respect to induced labeling and signature if and only if the following conditions are hold.
\begin{enumerate}
	\item[(i)] the set $E(\S\setminus \S')$ contains even number of negative edges in $\S$, if the signed graph $\S$ is edge disjoint.
	\item[(ii)] the set $E(\S\setminus \S')$ contains odd number of negative edges in $\S$ if some of the negative edges are common to two more cycles in $\S$.
\end{enumerate}
}\end{rem}

A \textit{subdivision signed graph} of a graph $G$ is the graph obtained by introducing a vertex to some or all edges of $G$. It is to be noted that the set-label of this newly introduced vertex is the same as that of the edge in $\S$ to which it is introduced. In view of this fact, the following theorem checks whether a signed graph obtained by subdividing some or all edges of a balanced AIASL-signed graph $\S$ to be balanced.

\begin{thm}
A signed graph $\S'$ obtained by subdividing an edge $e$ of a balanced AIASL-signed graph $\S$ is a balanced under induced set-labeling if and only if $e$ is a cut edge of $\S$.
\end{thm}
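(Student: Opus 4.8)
The plan is to reduce the balance of $\S'$ entirely to what the subdivision does to the sign of a single edge, and then to read off both directions of the equivalence from the cycle structure. Write $e=uv$ and let $w$ be the subdividing vertex, so that $f(w)=f^+(uv)=f(u)+f(v)$ and $e$ is replaced by the two edges $uw$ and $wv$ meeting at $w$. Since $w$ has degree $2$ in $\S'$, every cycle of $\S'$ either avoids $w$ (and is literally a cycle of $\S$ not through $e$, with unchanged edge signs) or passes through both $uw$ and $wv$ (and corresponds to a unique cycle $C$ of $\S$ through $e$, with $e$ replaced by its two pieces at $w$). Hence the only cycles whose sign can change are those through $e$, and for such a cycle the sign of its image $C'$ equals the sign of $C$ multiplied by $\s(uw)\,\s(wv)/\s(uv)$. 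Everything therefore hinges on the single claim that subdivision reverses the sign of $e$, that is, $\s(uw)\,\s(wv)=-\,\s(uv)$.

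The heart of the argument is this sign-flip claim, which I would prove by a parity computation built on the edge set-indexing formula recorded above. Assume without loss of generality that $d_u\le d_v$ and put $k=d_v/d_u$, the deterministic ratio of $e$. First I would establish that $f(w)$, being the sumset of two arithmetic progressions whose common differences satisfy $d_u\mid d_v$, is itself an arithmetic progression with common difference $d_w=d_u=\min(d_u,d_v)$ and cardinality $|f(w)|=|f(u)|+k(|f(v)|-1)$. This is the delicate point, and I expect it to be the main obstacle: it is exactly the fact that $w$ inherits the \emph{smaller} common difference that forces the deterministic ratio of the edge $uw$ to equal $1$. With $d_w=d_u$, the edge $uw$ joins two labels of equal common difference, so the set-indexing formula gives $|f^+(uw)|=|f(u)|+|f(w)|-1$, whereas the edge $wv$ has deterministic ratio $d_v/d_w=k$ (its defining hypothesis $k\le|f(w)|$ being inherited from $k\le|f(u)|\le|f(w)|$), giving $|f^+(wv)|=|f(w)|+k(|f(v)|-1)$. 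Substituting $|f(w)|=|f(u)|+k(|f(v)|-1)$ and reducing modulo $2$, the odd coefficients collapse and a stray $-1$ survives, yielding $|f^+(uw)|+|f^+(wv)|\equiv |f^+(uv)|+1 \pmod 2$; hence $\s(uw)\,\s(wv)=(-1)^{|f^+(uw)|+|f^+(wv)|}=-(-1)^{|f^+(uv)|}=-\,\s(uv)$, as claimed.

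With the sign-flip in hand, both directions follow quickly. If $e$ is a cut edge, then $e$ lies on no cycle of $\S$; subdividing it leaves $w$ on no cycle of $\S'$ either, since both $uw$ and $wv$ remain bridges, so every cycle of $\S'$ is a cycle of $\S$ with unaltered signature and is therefore positive, whence $\S'$ is balanced. Conversely, if $e$ is not a cut edge, choose a cycle $C$ of $\S$ through $e$; since $\S$ is balanced, $C$ is positive, but its image $C'$ in $\S'$ has the opposite sign by the sign-flip, so $C'$ is a negative cycle and $\S'$ is not balanced. The two implications together establish that $\S'$ is balanced if and only if $e$ is a cut edge. The one point to verify along the way is that the hypotheses of Theorem \ref{Thm-AIASI-1} are genuinely inherited by both new edges, so that the set-indexing formula applies to $uw$ and $wv$; this is the chain $k\le|f(u)|\le|f(w)|$ noted above.
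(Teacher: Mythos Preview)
Your argument is correct and is organised differently from the paper's. The paper proceeds by a parity case analysis on the set-labels of $u$, $v$, and the new vertex $w$ (same parity versus different parity, with subcases), checking in each case how many of the signs on $uw$, $wv$, $uv$ are negative and concluding that the relevant cycle picks up or loses an odd number of negative edges. You instead isolate one clean algebraic identity, namely $|f^+(uw)|+|f^+(wv)|=|f^+(uv)|+2|f(w)|-1\equiv |f^+(uv)|+1\pmod 2$, which yields the sign-flip $\s(uw)\,\s(wv)=-\s(uv)$ uniformly, and then read off both directions from the obvious bijection between cycles through $w$ in $\S'$ and cycles through $e$ in $\S$.

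What your approach buys is uniformity and rigour: it works regardless of the parity of the deterministic ratio $k$, and you explicitly check the two hypotheses needed for the set-indexing formula to apply to the new edges (that $f(w)$ is an arithmetic progression with common difference $d_u$, and that $k\le|f(u)|\le|f(w)|$). The paper's case analysis, by contrast, tacitly assumes $k$ is odd in places (for instance, the opening claim of its Case~1 that same-parity end labels force $|f^+(uv)|$ odd fails when $|f(u)|$ and $|f(v)|$ are both even and $k$ is even), and it does not verify that the AIASL conditions are inherited by $uw$ and $wv$. Your treatment is therefore both shorter and more complete.
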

\begin{proof}
Let $\S$ be a balanced AIASL-signed graph and let $e=uv$ be an arbitrary edge of $\S$. If $e$ is a cut edge of $\S$, then it is not contained any cycle of $\S$. Hence, the cycles in $\S'$ correspond to the same cycles in $\S$. Therefore, subdividing the edge $e$ will not affect the number of negative edges in any cycle of $\S'$. Therefore, $\S'$ is balanced.

Now assume that a signed graph $\S'$ obtained by subdividing the edge of $\S$ is balanced under induced set-labeling.  If possible, let $e$ be not a cut-edge of $G$, Then it is contained in a cycle $C$ of $\S$. Now, introduce a new vertex $w$ into the edge $uv$. Then, the edge $uv$ will be removed and two edges $uw$ and $vw$ are introduced to $\S$ and the vertex $w$ has the same set-label of the edge $uv$. Here, we need to consider the following cases.

\textit{Case-1}: Let $u$ and $v$ have the same parity set-labels. Then, the edge $uv$ has an odd parity set-label in $\S$ and hence the new vertex $w$ in $\S'$ has an odd parity set-label and negative signature. Hence, we have the following subcases.

\textit{Subcase-1.1}: If both  $u$ and $v$ have odd parity set-labels, the edges $uw$ and $vw$ are negative edges in the corresponding cycle $C'$ in $\S'$. Therefore, $C'$ contains odd number of negative edges, a contradiction to our hypothesis. 

\textit{Subcase-1.2}: If both $u$ and $v$ have odd parity set-labels, the edges $uw$ and $vw$ are positive edges in the corresponding cycle $C'$ in $\S'$. Therefore, the number of negative edges in $C'$ is one less than that of the corresponding cycle $C$ in $\S$, a contradiction to our hypothesis that $\S'$ is balanced.

\textit{Case-2}: Let $u$ and $v$ have different parity set-labels. Then, the edge $uv$ has an even parity set-label in $\S$ and hence the new vertex $w$ in $\S'$ has an even parity set-label and positive signature. Without loss of generality, let $u$ has even parity set-label and $v$ has odd parity set label in $\S$. Then, the edges $uw$ is a negative edge and the edge $vw$ is a positive edge in $\S'$. It can be noted that the cycle $C'$ contains one negative edge more than that in the corresponding cycle $C$ in $\S$. It is also a contradiction to the hypothesis. In all possible cases, we get contradiction and hence the edge $e$ must be a cut edge of $\S$. This complete the proof. 
\end{proof}

A signed graph $\S'$ is said to be homeomorphic to another signed graph $\S$ if $\S'$ is obtained by removing a vertex $v$ with $d(v)=2$ and is not a vertex of any triangle in $\S$, and joining the two pendant vertices thus formed by a new edge. This operation is said to be an elementary transformation on $\S$. The following theorem discusses the balance of a signed graph that is homeomorphic to a given balanced AIASL-signed graph $\S$.

\begin{thm}
A signed graph obtained from a balanced AIASL-signed graph $\S$ by applying an elementary transformation on a suitable vertex $v$  of $\S$ is a balanced signed graph with respect to induced set-labeling if and only if the vertex $v$ is not in any cycle in $\S$.
\end{thm}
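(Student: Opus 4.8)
The plan is to model the argument on the proof of the preceding subdivision theorem, since the elementary transformation (``smoothing'' a degree-$2$ vertex) is precisely the operation inverse to subdividing an edge. Throughout, let $v$ be the chosen vertex: a degree-$2$ vertex of $\S$ that lies on no triangle, with the two neighbours $u$ and $w$. The transformation deletes $v$ together with the edges $uv,vw$ and inserts the new edge $uw$, whose induced set-label is $f(u)+f(w)$ and whose signature is $\s(uw)=(-1)^{|f(u)+f(w)|}$. The hypothesis that $v$ lies on no triangle guarantees $uw\notin E(\S)$, so no multi-edge is created and the transformed graph $\S'$ is again simple.

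For the easy direction I would assume $v$ lies on no cycle of $\S$. Since $d(v)=2$, both $uv$ and $vw$ are then bridges, so the path $uvw$ meets no cycle; in particular $u$ and $w$ lie in distinct components of $\S-v$, whence the inserted edge $uw$ is itself a bridge of $\S'$. Thus smoothing $v$ neither creates nor destroys a cycle, and the cycles of $\S'$ are in sign-preserving bijection with those of $\S$. Balance of $\S$ therefore forces balance of $\S'$.

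For the converse I would argue contrapositively, supposing $v$ lies on some cycle $C$ of $\S$. Because $d(v)=2$, this $C$ must traverse the path $uvw$, and the transformation turns $C$ into a (simple, by the triangle hypothesis) cycle $C'$ of $\S'$ in which the two edges $uv,vw$ are replaced by the single edge $uw$, all other edges and their signs being unchanged. Hence the numbers of negative edges of $C$ and of $C'$ have the same parity if and only if an even number of the three edges $uv,vw,uw$ is negative, i.e. if and only if $\s(uv)\,\s(vw)\,\s(uw)=+$. Since $C$ is balanced, it therefore suffices to show that this product is always $-$, that is, that an odd number of the edges of the ``virtual triangle'' on $u,v,w$ is negative: this exhibits $C'$ as an unbalanced cycle and shows $\S'$ is not balanced.

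The core computation is thus the sign-flip identity $\s(uw)=-\,\s(uv)\,\s(vw)$, which I would establish by the parity case analysis of Theorem \ref{Thm-2.1}. When all three deterministic ratios are odd, Theorem \ref{Thm-2.1}(i) says an edge is negative exactly when its two endpoints carry set-labels of equal parity; since in every $2$-colouring of a triangle an odd number of edges is monochromatic, an odd number of $uv,vw,uw$ is negative and the product is $-$, as required. The main obstacle is the case in which one or more of these edges has an \emph{even} deterministic ratio, for then Theorem \ref{Thm-2.1}(ii) makes the sign depend only on the parity of the minimum-common-difference endpoint, and the clean ``monochromatic-triangle'' relation breaks down. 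I would therefore have to run the transformation through each admissible configuration of the common differences $d_u,d_v,d_w$ and verify the sign flip separately in every such case; once the identity holds across all parity/deterministic-ratio configurations, the converse follows and the equivalence is complete.
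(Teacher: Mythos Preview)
Your plan is essentially the paper's own argument: the forward direction is identical (if $v$ lies on no cycle the transformation leaves every cycle and its signs untouched), and for the converse the paper likewise reduces to the virtual triangle on $u,v,w$ and verifies, by a parity case split on the set-labels of $u,v,w$, that replacing $uv,vw$ by $uw$ changes the negative-edge count of the ambient cycle by exactly one---which is precisely your identity $\s(uv)\,\s(vw)\,\s(uw)=-$. The only difference is that the paper runs its case analysis tacitly under the odd-ratio hypothesis of Theorem~\ref{Thm-2.1}(i) and never treats the even-ratio configurations you (rightly) flag as needing separate verification.
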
 
\begin{proof}
Let $\S$ be a balanced AIASL-signed graph and let $v$ be any vertex of $\S$ with degree $2$ and is not in any triangle in $\S$. Also, let $\S'$ be a signed graph obtained from $\S$ by applying an elementary transformation on $v$. 

Since $d(v)=2$, it is adjacent to two vertices, say $u$ and $w$ in $\S$. If $v$ is not in any cycle in $\S$, then the number of negative edges in any cycle of $\S$ and hence the number of negative edges in the corresponding cycles in $\S'$ will not be affected by the elementary transformation on $v$. Therefore, $\S'$ is balanced.

Conversely, assume that $v$ is a vertex of a cycle $C$ in $\S$. Then we need to consider the following cases.

\textit{Case-1}: Let $u$ and $w$ have same parity set-labels.  Here we have the following subcases.

\textit{Subcase-1.1}: If the set-label of $v$ and the set-label of $u$ and $w$ are of the same parity, then the edges $uv$ and $vw$ are negative edges in $C$ of $\S$ and the edge $uw$ is a negative edge in the corresponding cycle $C'$ in $\S'$. Therefore, $C'$ contains one negative edge less than that of $C$ in $\S$. Hence, in this case, $\S'$ not balanced.

\textit{Subcase-1.2}: If the parity of the set-label of $v$ is different from that of the set-labels of $u$ and $w$, then the edges $uv$ and $vw$ are positive edges in the cycle $C$ of $\S$. But, the edge $uw$ is a negative edge in the corresponding cycle $C'$ of $\S'$. Hence, the cycle $C'$ contains one negative cycle more than that of the corresponding cycle $C$ in $\S$. Therefore, in this case also, $\S'$ not balanced.

\textit{Case-2}: Let $u$ and $w$ have different parity set-labels. Then, the set-label of $v$  and the set-label either $u$ or $v$ are of the same parity. Without loss of generality, let the set-labels of $u$ and $v$ be of the same parity. Then, the edge $uv$ is a negative edge and $vw$ is a positive edge in the cycle $C$ of $\S$. But, since $u$ and $v$ have different parity set-labels, the edge $uw$ is a positive edge in the corresponding cycle $C'$ of $\S'$. Hence, the cycle $C'$ contains one negative  cycle less than that of the corresponding cycle $C$ in $\S$. Therefore, $\S'$ not balanced. This completes the proof.
\end{proof}

\section{Conclusion}

In this paper, we discussed the characteristics and properties of the signed graphs which admits arithmetic integer additive set-labeling with a prime focus on the balance of these signed graphs. There are several open problems in this area. Some of the open problems that seem to be promising for further investigations are following.

\begin{prob}{\rm 
Discuss the $k$-clusterability of different types of IASL-signed graphs for $k>2$.}
\end{prob}

\begin{prob}{\rm 
Discuss the balance and $2$-clusterability and general $k$-clusterability of other types of signed  graphs which admit different types of arithmetic IASLs.}
\end{prob}

\begin{prob}{\rm 
Discuss the balance and $2$-clusterability and general $k$-clusterability of graceful, sequential and topological IASL-signed  graphs.}
\end{prob}

\begin{prob}{\rm 
Discuss the admissibility of AIASLs by the signed graphs obtained from the AIASL-signed graphs by finite number of edge contractions.}
\end{prob}

\begin{prob}{\rm 
Discuss the admissibility of AIASLs by the signed graphs whose underlying graphs are the line graphs and total graphs of the underlying graphes of certain  AIASL-signed graphs.}
\end{prob}

Further studies on other characteristics of signed graphs corresponding to different IASL-graphs are also interesting and challenging. All these facts highlight the scope for further studies in this area.


\begin{thebibliography}{20}

\bibitem{BDAS} B. D. Acharya, {\em Set-valuations of signed digraphs}, J. Combin. Inform. System Sci., {\bf 37}(2-4)(2012), 145-167.

\bibitem{AACE} J Akiyama, D.  Avis, V. Cha\'{v}tal and H. Era, {\em Balancing signed graphs}, Discrete App. Math., {\bf 3}(4)(1981), 227-233., DOI: 10.1016/0166-218X(81)90001-9.

\bibitem{AGS1} P. K. Ashraf, K. A. Germina and N. K. Sudev, {\em A study on set-valuations of signed graphs}, communicated.

\bibitem {JAG} J. A. Gallian, {\em A dynamic survey of graph labeling}, Electron. J. Combin., (2015), (\#DS-6). 

\bibitem {GSH} K. A. Germina and S. Hameed, {\em On signed paths, signed cycles and their energies}, Appl. Math. Sci., {\bf 4}(70)(2010), 3455 – 3466. 

\bibitem {GS1} K. A. Germina and N. K. Sudev,  {\em On weakly uniform integer additive set-indexers of graphs}, Int. Math. Forum, {\bf 8}(37)(2013), 1827-1834. DOI: 10.12988/imf.2013.310188.

\bibitem{FH}  F. Harary, {\bf Graph theory}, Addison-Wesley Publ. Co. Inc., 1969.

\bibitem{FHS} F. Harary, {\em On the notion of balance of a signed graph}, The Michigan Math. J., {\bf 2}(2)(1953), 143-146.

\bibitem {MBN} M. B. Nathanson,  {\bf Additive number theory, inverse problems and geometry of sumsets}, Springer, New York, 1996.

\bibitem{GS0} N. K. Sudev and K. A. Germina, {\em On integer additive set-indexers of graphs}, Int. J. Math. Sci.  Engg. Appl., {\bf 8}(2)(2014), 11-22.

\bibitem{GS3} N. K. Sudev and K. A. Germina, {\em A study on arithmetic integer additive set-indexers of graphs}, Carpathian Math. Publ., 2016, to appear.

\bibitem{GS4} N. K. Sudev and K. A. Germina, {\em On certain types of arithmetic integer additive set-indexers of graphs}, Discrete Math. Algorithms Appl., {\bf 7}(1)(2015),1-15., DOI: 10.1142/S1793830915500251.

\bibitem{GS5} N. K. Sudev and K. A. Germina, {\em A study on integer additive set-valuation of signed graphs}, Carpathian Math. Publ., {\bf 7}(2)(2015), 236-246., DOI:10.15330/cmp.7.2.236-246.

\bibitem{DBW} D. B. West, {\bf Introduction to graph theory}, Pearson Education Inc., 2001.

\bibitem{TZ1} T. Zaslavsky, {\em Signed graphs}, Discrete Appl. Math., {\bf 4}(1)(1982), 47-74., DOI: 10.1016/0166-218X(82)90033-6.

\bibitem{TZ2} T. Zaslavsky, {\em Signed graphs and geometry}, J. Combin. Inform. System Sci., {\bf 37}(2-4)(2012), 95-143.

\end{thebibliography}
\end{document}